\newcommand{\nc}{\newcommand}
\nc{\dmo}{\DeclareMathOperator}
\nc{\nt}{\newtheorem}
\nc{\SI}{\mathcal{SI}}
\nc{\SIBK}{\mathcal{SIBK}}
\dmo{\Mod}{Mod}
\dmo{\SMod}{SMod}
\nc{\Z}{\mathbb{Z}}
\nc{\p}[1]{\medskip\noindent {\bf #1.}}
\nc{\margin}[1]{\marginpar{\scriptsize #1}}
\title{Factoring in the hyperelliptic Torelli group}
\begin{document}
	
\author{Tara E. Brendle}

\author{Dan Margalit}

\address{Tara E. Brendle \\ School of Mathematics and Statistics \\15 University Gardens \\ University of Glasgow \\ G12 8QW \\ tara.brendle@glasgow.ac.uk}

\address{Dan Margalit \\ School of Mathematics\\ Georgia Institute of Technology \\ 686 Cherry St. \\ Atlanta, GA 30332 \\  margalit@math.gatech.edu}

\thanks{The first author is partially supported by the EPSRC.  The second author is supported by the Sloan Foundation and the National Science Foundation.}

\keywords{Torelli group, hyperelliptic involution, Dehn twists}

\subjclass[2000]{Primary: 20F36; Secondary: 57M07}

\begin{abstract}
The hyperelliptic Torelli group is the subgroup of the mapping class group consisting of elements that act trivially on the homology of the surface and that also commute with some fixed hyperelliptic involution.  The authors and Putman proved that this group is generated by Dehn twists about separating curves fixed by the hyperelliptic involution.  In this paper, we introduce an algorithmic approach to factoring a wide class of elements of the hyperelliptic Torelli group into such Dehn twists, and apply our methods to several basic elements.
\end{abstract}

\maketitle


\section{Introduction}

Let $s : S_g^1 \to S_g^1$ be a hyperelliptic involution of a surface of genus $g$ with one boundary component; see Figure~\ref{figure:hi}.  The hyperelliptic Torelli group $\SI(S_g^1)$ is the group of homeomorphisms of $S_g^1$ that commute with $s$, restrict to the identity on $\partial S_g^1$, and act trivially on $H_1(S_g^1)$, modulo isotopy.  This group arises as the fundamental group of each component of the branch locus of the period mapping and also as the kernel of the Burau representation at $t=-1$; see \cite{hain}.

\definecolor{magenta4}{rgb}{.543,0,.543}

\begin{figure}[htb]
\psfrag{...}{\textcolor{magenta4}{$\dots$}}
\centerline{\includegraphics[scale=.65]{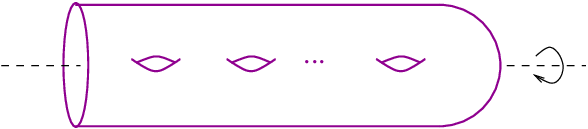}}
\caption{Rotation by $\pi$ about the indicated axis is a hyperelliptic involution.}
\label{figure:hi}
\end{figure}

The simplest nontrivial element of $\SI(S_g^1)$ is a Dehn twist about a symmetric separating curve, that is, a separating curve fixed by $s$.  Hain conjectured $\SI(S_g^1)$ is generated by such elements, and the authors recently proved this conjecture with Putman \cite{BMP}.    
There are two other basic elements of $\SI(S_g^1)$:
\begin{description}
 \item[Symmetric simply intersecting pair maps] If $x$ and $y$ are symmetric nonseparating curves with vanishing algebraic intersection $\hat\imath(x,y)$, then the commutator of their Dehn twists $[T_x,T_y]$ lies in $\SI(S_g^1)$; see Figure~\ref{figure:maps}.
\item[Symmetrized simply intersecting pair maps]  If $u_1$, $v_1$, $u_2$, and $v_2$ are nonseparating curves with $|u_1 \cap v_1|=2$, $\hat\imath(u_1,v_1)=0$, $s(u_1)=u_2$, $s(v_1)=v_2$, and $(u_1 \cup v_1) \cap (u_2 \cap v_2) = \emptyset$, then $[T_{u_1}T_{u_2},T_{v_1}T_{v_2}]$ lies in $\SI(S_g^1)$; see Figure~\ref{figure:maps}.
\end{description}
When the authors first learned of Hain's conjecture, it seemed intractable because we did not know how to factor these elements into Hain's proposed generators.  In this paper, we not only give relatively simple factorizations for both, but we also give an algorithm for factoring a much wider class of elements.  We expect that our relations will play a role for $\SI(S_g^1)$ analogous to the critical role that the classical lantern relation has played in our understanding of the full Torelli group.

\p{Higher genus twists} The genus of a separating curve in $S_g^1$ is the genus of the complementary component not containing $\partial S_g^1$.  In our earlier paper  \cite{sibk}, we showed that a Dehn twist about a symmetric separating curve of arbitrary genus is equal to a product of Dehn twists about symmetric separating curves of genus 1 and 2.  In particular, by our theorem with Putman, $\SI(S_g^1)$ is generated by Dehn twists about such curves.  As an application of the methods of this paper, we give an explicit factorization of the Dehn twist about any genus $k \geq 3$ symmetric separating curve into Dehn twists about symmetric separating curves of smaller genus.

\begin{figure}[htb]
\psfrag{a}{$y$}
\psfrag{b}{$x$}
\psfrag{u}{$u_1$}
\psfrag{v}{$v_1$}
\psfrag{u'}{$u_2$}
\psfrag{v'}{$v_2$}
\centerline{\includegraphics[scale=.9]{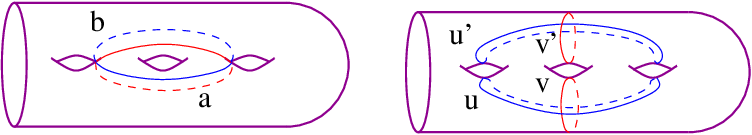}}
\caption{Left: The curves $x$ and $y$ form a symmetric simply intersecting pair. Right: the curves $u_1$, $v_1$, $u_2$, and $v_2$ form a symmetrized simply intersecting pair.}
\label{figure:maps}
\end{figure}

\p{Algorithmic factorizations} Let $a$ be a symmetric nonseparating curve in $S_g^1$, and denote by $\SI(S_g^1,a)$ the stabilizer of the isotopy class of $a$ in $\SI(S_g^1)$.  There is an $s$-equivariant inclusion $S_g^1-a \to S_{g-1}^1$ and this induces a surjective homomorphism $\SI(S_g^1,a) \to \SI(S_{g-1}^1)$ \cite[Proposition 6.6]{sibk}.  We denote the kernel by $\SIBK(S_g^1,a)$:
\[ 1 \to \SIBK(S_g^1,a) \to \SI(S_g^1,a) \to \SI(S_{g-1}^1) \to 1. \]

\begin{thm}
\label{thm:alg}
There is an explicit algorithm for factoring arbitrary elements of $\SI(S_g^1,a)$ into Dehn twists about symmetric separating curves of genus 1 and 2.
\end{thm}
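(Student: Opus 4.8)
\p{Proof proposal}
The plan is to induct on the genus $g$, removing one handle at a time via the short exact sequence
\[ 1 \to \SIBK(S_g^1,a) \to \SI(S_g^1,a) \to \SI(S_{g-1}^1) \to 1 \]
displayed above. The base case $g=1$ is immediate, since $\SI(S_1^1,a) = \SI(S_1^1)$ is infinite cyclic, generated by the twist about a curve parallel to $\partial S_1^1$, which is a symmetric separating curve of genus $1$. For the inductive step, take $f \in \SI(S_g^1,a)$ and let $\bar f \in \SI(S_{g-1}^1)$ be its image. By \cite{BMP} and \cite{sibk} the group $\SI(S_{g-1}^1)$ is generated by Dehn twists about symmetric separating curves of genus $1$ and $2$, and, using the constructive nature of those results together with the recursion, we compute a factorization $\bar f = T_{c_1}^{\epsilon_1}\cdots T_{c_n}^{\epsilon_n}$ of this form.

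The second step is to lift this factorization through the surjection. The inclusion $S_g^1 - a \hookrightarrow S_{g-1}^1$ is realized by cutting $S_g^1$ along $a$ and capping the two new boundary circles with once-marked disks, producing marked points $p_1$ and $p_2$ that are interchanged by the hyperelliptic involution. A symmetric separating curve $c$ in $S_{g-1}^1$ that is disjoint from $\{p_1,p_2\}$ and does not separate the two points pulls back to a symmetric separating curve $\tilde c$ in $S_g^1$, disjoint from $a$, and $T_{\tilde c}$ lifts $T_c$; if $\tilde c$ has genus greater than $2$ we rewrite $T_{\tilde c}$ as a product of genus $1$ and $2$ symmetric separating twists using \cite{sibk}. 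After first modifying the factorization of $\bar f$ via the relations established in this paper, we may arrange that each $c_i$ is of this type; a $c_i$ that genuinely interacts with $p_1$ and $p_2$ is instead handled by an explicit preimage — for instance a product of a higher-genus symmetric separating twist and a symmetrized simply intersecting pair map — which the methods of this paper factor explicitly. Assembling the lifts gives an element $\tilde F \in \SI(S_g^1,a)$, with an explicit factorization into genus $1$ and $2$ symmetric separating twists, whose image is $\bar f$. Then $h := f\,\tilde F^{-1}$ lies in $\SIBK(S_g^1,a)$.

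The third step is to factor elements of the Birman-type kernel $\SIBK(S_g^1,a)$, which is a ``handle-pushing'' subgroup: an element lies in it exactly when it becomes trivial after cutting along $a$ and capping, so it is generated by mapping classes supported in a neighborhood of $a$ together with an arc — concretely, by symmetric separating twists and by symmetric and symmetrized simply intersecting pair maps of a restricted, explicit type. Each of these is among the basic elements treated in the body of the paper, and each is given an explicit factorization into symmetric separating twists of genus $1$ and $2$. Writing $h$ as a product of such generators, substituting their factorizations, and concatenating with the factorization of $\tilde F$ yields the desired factorization of $f = h\,\tilde F$, completing the inductive step and the proof.

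I expect the main obstacle to be the analysis of $\SIBK(S_g^1,a)$: producing a usable generating set and an explicit factorization of each generator. The Birman exact sequence for the hyperelliptic Torelli group is more delicate than its analogue for the full mapping class group because of the equivariance requirement, and isolating the generators while checking that the relations of this paper suffice to factor them is the technical core. A closely related difficulty is the lifting step, specifically the treatment of symmetric separating curves in $S_{g-1}^1$ that meet or separate the two marked points: exhibiting explicit preimages for these is exactly what the higher-genus twist factorization and the symmetrized simply intersecting pair map factorization are designed for. Finally, one must check that each step — computing $\bar f$, locating a factorization of it, recognizing the relevant curve configurations, and applying \cite{sibk} — is effective; this follows from solvability of the word problem in the mapping class group and the constructive character of the cited results.
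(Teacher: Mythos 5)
Your high-level skeleton — induct on genus via the short exact sequence $1 \to \SIBK(S_g^1,a) \to \SI(S_g^1,a) \to \SI(S_{g-1}^1) \to 1$, lift a factorization of the image, and then handle the kernel $\SIBK(S_g^1,a)$ — matches the way the paper frames the problem. But the entire content of the theorem is the third step, and there you have a genuine gap rather than a proof. You write ``so it is generated by mapping classes supported in a neighborhood of $a$ together with an arc — concretely, by symmetric separating twists and by symmetric and symmetrized simply intersecting pair maps of a restricted, explicit type,'' and then acknowledge in your final paragraph that producing such a generating set and factoring the generators is ``the technical core.'' That is exactly right, and it is exactly what your proposal does not supply. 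The paper's mechanism is quite different from anything you describe: it identifies $\SIBK(S_g^1,a)$, via the Birman--Hilden and Birman exact sequences, with the kernel of an explicit homomorphism $\epsilon : F_{2g-1}^{even} \to \Z^{2g-1}$, where $F_{2g-1}^{even} \subset \pi_1(D_{2g-1}^\circ,p)$. It then shows $\ker\epsilon$ is normally generated by $\{x_i^2\} \cup \{[x_ix_1,x_jx_1]\}$, supplies the crucial free-group identity
\[ [x_ix_1,x_jx_1] = [x_j^{-2}(x_jx_ix_1)^2(x_i^{-2})^{x_1^{-1}}x_1^{-2}]^{x_j}, \]
interprets each square of a simple loop about $1$ or $3$ punctures as $T_c$ or $T_cT_d^{-1}$ with $c,d$ symmetric separating curves of genus $1$ and $2$, and finally turns this into an algorithm by combining Artin's combing algorithm with the (trivially solvable) word problem in the free abelian group $\Z^{2g-1}_{bal}$. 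None of this appears in your proposal, and there is no substitute for it; without it you have a reduction, not an algorithm.

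Two further problems with the reduction step as you describe it. First, your inductive hypothesis applies to stabilizer subgroups $\SI(S_{g-1}^1,a')$, not to $\SI(S_{g-1}^1)$ itself, so when you push $f$ to $\bar f \in \SI(S_{g-1}^1)$ you cannot simply ``recurse.'' You appeal instead to ``the constructive nature'' of \cite{BMP}, but that is a generation theorem, not a factorization algorithm, and the paper does not claim otherwise; the paper's algorithm is explicitly for elements already known to lie in $\SIBK(S_g^1,a)$. Second, your treatment of the lifting step — curves $c_i$ in $S_{g-1}^1$ that separate or meet the marked points $p_1,p_2$ — gestures at ``an explicit preimage, for instance a product of a higher-genus symmetric separating twist and a symmetrized simply intersecting pair map,'' but this is not verified, and even if it were, it presupposes the very factorizations (of higher-genus twists and symmetrized SIP maps) that the paper derives \emph{from} Theorem~\ref{thm:alg}, making the dependency backwards. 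In short: your proposal correctly identifies where the difficulty lies, but the missing piece — the free-group translation and the word-problem argument in $\Z^{2g-1}_{bal}$ — is the proof.
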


The idea is to identify $\SIBK(S_g^1,a)$ with a subgroup of the fundamental group of a disk with $2g-1$ points removed.  Then the problem of factoring elements of $\SIBK(S_g^1,a)$ into Dehn twists about symmetric separating curves is translated into a problem of finding special factorizations of certain elements of this free group.

Given any symmetric simply intersecting pair map or symmetrized simply intersecting pair map, we can find a curve $a$ so that the given map lies in $\SIBK(S_g^1,a)$; choose $a$ to be the core of any annular region in the complement of the union of the defining curves of the map.  Therefore, we can understand both types of maps in the context of Theorem~\ref{thm:alg}.

If $c$ is a genus $k$ symmetric separating curve in $S_g^1$, then we can choose a genus $k-1$ symmetric separating curve $d$ and a symmetric nonseparating curve $a$ so that $T_cT_d^{-1}$ lies in the corresponding $\SIBK(S_g^1,a)$; we take $a$ to lie in the genus 1 subsurface between $c$ and $d$.  Therefore, by Theorem~\ref{thm:alg}, we can factor $T_cT_d^{-1}$ into a product of Dehn twists about symmetric separating curves of genus 1 and 2.

We emphasize that the existence of this algorithm does not guarantee that one can find a simple factorization for a given element of $\SIBK(S_g^1,a)$.  The factorizations we give in this paper were only found after much trial and error (cf. \cite{v1}).  Their relative tameness suggests that $\SI(S_g^1)$ is more tractable than originally believed.

Finally, we can obtain factorizations in the hyperelliptic Torelli group of a closed surface $S_h$ by including $S_g^1$ into $S_h$ where $h \geq g$; the induced map $\SI(S_g^1) \to \SI(S_h)$ is injective if $h > g$ and has cyclic kernel $\langle T_{\partial S_g^1} \rangle$ otherwise; see  \cite[Theorem 4.2]{sibk}.

\p{Acknowledgments} First, we would like to thank Andrew Putman for suggesting the problem of factoring the symmetrized simply intersecting pair maps.  We would also like to thank Mladen Bestvina, Joan Birman, Leah Childers, and an anonymous referee for helpful comments and conversations.  Our work was greatly aided by the Maple program $\beta$-Twister, written by Marta Aguilera and Juan Gonz\'alez-Meneses.

\section{The factoring algorithm}
\label{sec:alg}

In this section we explain how to algorithmically factor an arbitrary element of $\SIBK(S_g^1,a)$ as per Theorem~\ref{thm:alg}.  

\p{The setup}\label{sec:bh}  We would like to rephrase our problem about factoring elements of $\SI(S_g^1,a)$ into a problem about certain factorizations in a free group. We start by giving some definitions and then outlining the idea.

Let $D_{2g-1}$ denote a disk with $2g-1$ marked points and $D_{2g-1}^\circ$ the punctured disk obtained by removing the marked points.  The fundamental group of $D_{2g-1}^\circ$ is a free group $F_{2g-1}$; we take the generators $x_1,\dots,x_{2g-1}$ for $F_{2g-1}$ to be simple loops in $D_{2g-1}$ each surrounding one marked point; see Figure~\ref{figure:zetas}.

\begin{figure}[htb]
\psfrag{z1}{$x_{2g-1}$}
\psfrag{z2}{$x_{2g-2}$}
\psfrag{z2g-1}{$x_1$}
\psfrag{abar}{$p$}
\psfrag{...}{$\ \, \cdots$}
\centerline{\includegraphics[scale=1.1]{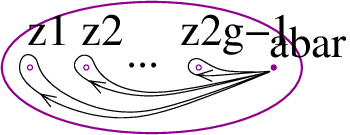}}
\caption{The generators $x_i$ for $\pi_1(D_{2g-1}^\circ)$.}
\label{figure:zetas}
\end{figure}

Let $F_{2g-1}^{even}$ denote the kernel of the map $F_{2g-1} \to \Z/2\Z$ given by $x_i \mapsto 1$ for all $i$; this group is generated by the $x_i^{\delta_i}x_j^{\delta_j}$ with $i \leq j$ and $\delta_i \in \{-1,1\}$.  Denote the generators for $\Z^{2g-1}$ by $e_1,\dots,e_{2g-1}$.  Let $\epsilon : F_{2g-1}^{even} \to \Z^{2g-1}$ be the homomorphism given by $x_i^{\delta_i} x_j^{\delta_j} \mapsto e_i - e_j$.
We will require the following two facts, explained below.
\begin{enumerate}
\item  There is an isomorphism $\Psi : \SIBK(S_g^1,a) \to \ker \epsilon$.
\item  $\ker \epsilon$ is generated by squares of simple loops in $D_{2g-1}^\circ$ about 1 or 3 punctures.
\end{enumerate}
Once we define $\Psi$, it will be easy to see that squares of simple loops in $D_{2g-1}^\circ$ surrounding 1 or 3 punctures correspond to products of Dehn twists about symmetric separating curves in $S_g^1$ of genus 1 and 2.
After discussing the above two facts, we proceed to explain the factorization algorithm of Theorem~\ref{thm:alg}.

\p{The isomorphism $\Psi$} The isomorphism $\Psi$ was given in our earlier paper \cite[Theorem 1.2]{sibk}; we recall the construction.  In what follows, the mapping class group of a surface $S$ is the group $\Mod(S)$ of isotopy classes of homeomorphisms of $S$ that restrict to the identity on $\partial S$ and preserve the set of marked points.  

The quotient $S_g^1/\langle s \rangle$ is a disk $D_{2g+1}$ with $2g+1$ marked points, and $\Mod(D_{2g+1})$ is isomorphic to the braid group $B_{2g+1}$.  Let $\SMod(S_g^1)$ be the subgroup of $\Mod(S_g^1)$ with elements represented by $s$-equivariant homeomorphisms.  Birman--Hilden proved the natural map $\Theta : \SMod(S_g^1) \to B_{2g+1}$ is an isomorphism \cite[Theorem 9.1]{primer}.

The group $\SI(S_g^1,a)$ maps to $\Mod(D_{2g+1},\overline a)$, the stabilizer of the isotopy class of the arc $\overline a$, the image of $a$ in $D_{2g+1}$.   By collapsing $\overline a$ to a marked point $p$ and removing the other $2g-1$ marked points to obtain $2g-1$ punctures, we obtain a homomorphism $\Xi : \Mod(D_{2g+1},\overline a) \to \Mod(D_{2g-1}^\circ,p)$.  Since the kernel of $\Xi \circ \Theta$ is generated by $T_a$, the restriction $\Psi : \SI(S_g^1,a) \to \Mod(D_{2g-1}^\circ,p)$ is injective.

We then arrive at the following special case of the Birman exact sequence:
\[
 1 \to \pi_1(D_{2g-1}^\circ,p) \to \Mod(D_{2g-1}^\circ,p) \to \Mod(D_{2g-1}^\circ) \to 1.
 \]
 The first nontrivial map here is actually an anti-homomorphism, as the usual orders of operation in the two groups do not agree.  Therefore, relations in $\pi_1(D_{2g-1}^\circ,p)$ will translate to the reverse relations in $\Mod(D_{2g-1}^\circ,p)$.
 
 The image of $\SIBK(S_g^1,a)$ under $\Psi$ lies in the kernel $\pi_1(D_{2g-1}^\circ,p)$ of the Birman exact sequence.  In our earlier paper \cite[Lemma 4.5]{sibk} we showed that for $\alpha \in \pi_1(D_{2g-1}^\circ,p)$, the action of the lift $(\Xi \circ \Theta)^{-1}(\alpha)$ on $H_1(S_g^1;\Z)$ is exactly given by $\epsilon(\alpha)$, and so the image of $\Psi$ is precisely $\ker \epsilon$.

\p{Squares of simple loops} If $\alpha \in \pi_1(D_{2g-1}^\circ,p)$ is a simple loop surrounding $k$ punctures, where $k$ is odd, then $\alpha^2$ lies in $\ker \epsilon = \textrm{Im} \Psi$ and  $\Psi^{-1}(\alpha^2)$ is equal to $T_cT_d^{-1}$, where $c$ and $d$ are the preimages in $S_g^1$ of the curves obtained by pushing $\alpha$ off of $p$ to the left and right, respectively, and positive Dehn twists are to the left.  The curves $c$ and $d$ are separating curves of genus $(k+1)/2$ and $(k-1)/2$ (not necessarily in that order) and $a$ lies in the genus 1 subsurface between them.  When $k=1$, note that one of the two separating curves is inessential.

We will now show that these $\alpha^2$ generate $\ker \epsilon$.  To begin, the image of $\epsilon$ is $\Z_{bal}^{2g-1}$, the kernel of the map $\Z^{2g-1} \to \Z$ recording  the coordinate sum \cite[Lemma 5.1]{sibk}.  Also, the group $F_{2g-1}^{even}$ is generated by elements of the form $x_i^2$ and the $x_jx_1$, since
\[  x_i x_j = (x_i x_1) (x_jx_1)^{-1}(x_j^2) \quad \text{and} \quad x_i x_j^{-1} = (x_i x_1) (x_jx_1)^{-1}, \]
and $\Z_{bal}^{2g-1}$ has a presentation whose generators are the images of these generators:
\[ \langle \epsilon(x_1^2), \dots, \epsilon(x_{2g-1}^2), \epsilon(x_2x_1) , \dots, \epsilon(x_{2g-1}x_1) \mid \epsilon(x_i^2), [\epsilon(x_ix_1),\epsilon(x_jx_1)] \rangle. \]
It follows that $\ker \epsilon$ is normally generated by the set
\[ \{ x_i^2 \mid 1 \leq i \leq 2g-1 \} \cup \{ [x_ix_1,x_jx_1] \mid 1 \leq i < j \leq 2g-1 \}. \]
We notice the following relation in $\ker \epsilon$:
\[ [x_ix_1,x_jx_1] = [x_j^{-2}(x_jx_ix_1)^2(x_i^{-2})^{x_1^{-1}}x_1^{-2}]^{x_j}, \]
where $x^y$ denotes $yxy^{-1}$.  It now follows that $\ker \epsilon$ is normally generated by
\[ \{x_i^2 \mid 1 \leq i \leq 2g-1 \} \cup \{ (x_jx_ix_1)^2 \mid 1 \leq i < j \leq 2g-1 \}. \]
Referring to Figure~\ref{figure:zetas}, we see that each $x_jx_ix_1$ is a simple closed curve in $D_{2g-1}^\circ$ when $j > i$.  In particular, $\ker \epsilon$ is generated by
\[ \{ \alpha^2 \mid \alpha \text{ is a simple loop surrounding 1 or 3 punctures} \}. \]
It follows that $\SIBK(S_g^1,a)$ is generated by maps of the form $T_c$ where $c$ is a symmetric separating curve of genus 1 with $a$ lying on the genus 1 side of $c$ and of the form $T_cT_d^{-1}$ where $c$ and $d$ are symmetric separating curves of genus 1 and 2, respectively, with $a$ lying in the genus 1 subsurface between.

\p{The algorithm}  We now give an algorithm for factoring arbitrary elements of $\ker \epsilon$ in terms of squares of simple loops in $D_{2g-1}^\circ$, each surrounding 1 or 3 punctures.

Suppose we are given some $f \in \SIBK(S_g^1,a)$ as a product of Dehn twists about symmetric curves in $S_g^1$.  We can realize $f$ as an element $\overline f$ of the group $\Mod(D_{2g-1}^\circ,p)$ using the following dictionary: a Dehn twist about a symmetric nonseparating curve $c$ corresponds to a half-twist about the image arc $\overline c$ in $D_{2g-1}^\circ$ and a Dehn twist about a symmetric separating curve corresponds to the square of the Dehn twist about the image curve in $D_{2g-1}^\circ$.  As  above, since $f \in \SIBK(S_g^1, a)$, we know that $\overline f$ lies in the kernel of the above Birman exact sequence.   We can then use Artin's combing algorithm for pure braids \cite{artin2} to write $\overline f$ as a word $w_0$ in the $x_i^{\pm 1}$.

As above, the word $w_0$ lies in $F_{2g-1}^{even}$, and so it equals some word $w$ in the $x_i^2$ and the $x_i x_1$. 
Since $\Psi(f) \in \ker \epsilon$, the word $w$ maps to a relator in $\Z^{2g-1}_{bal}$ with respect to the presentation given above, that is, $w$ maps to a word in the generators $\epsilon(x_i^2)$ and $\epsilon(x_ix_1)$ that equals the identity.  Therefore, there is a sequence of commutations, free cancellations, and cancellations of $\epsilon(x_i^2)$-terms transforming $\epsilon(w)$ into the empty word (this is the obvious solution to the word problem for a free abelian group).  By the correspondence between relators for $\Z^{2g-1}_{bal}$ and normal generators for $\ker \epsilon$, we obtain a factorization of $w$ into a product of conjugates of the $x_i^2$, the $[x_ix_1,x_jx_1]$, and their inverses.  We already explained above how to factor $[x_ix_1,x_jx_1]$ into a product of squares of simple loops surrounding 1 or 3 punctures, so we are done.

\p{Relations in genus two} Above, we explained how the commutator $[x_ix_1,x_jx_1]$ corresponds to an element of $\SI(S_2^1)$ and we factored this into a product of five Dehn twists about symmetric separating curves in $S_2^1$ (see \cite[Theorem 3.1]{v1} for a picture of the curves).  If we cap the boundary of $S_2^1$ with a disk, $[x_ix_1,x_jx_1]$ maps to $[T_c^2T_e^{-2},T_b^2T_d^{-2}]T_a^8$ in $\SI(S_2)$; see Figure~\ref{figure:crazy1}.  The related (but simpler) element $[T_cT_e^{-1},T_bT_d^{-1}]T_a^2$ also lies in $\SI(S_2)$ and so is a product of Dehn twists about symmetric separating curves.  Surprisingly, it equals a single (left) Dehn twist.

\begin{figure}[htb]
\psfrag{a}{$d$}
\psfrag{b}{$c$}
\psfrag{c}{$b$}
\psfrag{d}{$e$}
\psfrag{e}{$a$}
\psfrag{f}{$f$}
\centerline{\includegraphics[scale=.8]{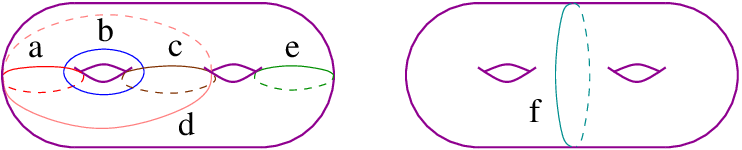}}
\caption{The curves $a$, $b$, $c$, $d$, $e$, and $f$ from Theorem~\ref{thm:aux1}.}
\label{figure:crazy1}
\end{figure}

\begin{thm}
\label{thm:aux1}
Let $a$, $b$, $c$, $d$, $e$, and $f$ be as in Figure~\ref{figure:crazy1}.  We have:
\[[T_cT_e^{-1},T_bT_d^{-1}]T_a^2 = T_f. \]
\end{thm}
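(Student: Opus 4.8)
My plan is to transport the identity into the braid group via the Birman--Hilden isomorphism and verify it there. Realize the six curves of Figure~\ref{figure:crazy1} inside a subsurface $S_2^1 \subset S_2$ whose boundary is disjoint from all of them. Then the mapping class $g := [T_cT_e^{-1},T_bT_d^{-1}]T_a^2$ lies in $\SMod(S_2^1)$, being a product of Dehn twists about symmetric curves, and likewise $T_f \in \SMod(S_2^1)$. Since capping $\partial S_2^1$ with a disk induces a homomorphism onto $\Mod(S_2)$ whose kernel (on the relevant subgroup) is $\langle T_{\partial S_2^1}\rangle$, it suffices to prove $g = T_f$ in $\SMod(S_2^1)$ up to a power of $T_{\partial S_2^1}$; capping then gives the theorem, and since the left-hand side visibly acts trivially on $H_1(S_2)$ the common value lies in $\SI(S_2)$.

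Next I would apply the Birman--Hilden isomorphism $\Theta : \SMod(S_2^1) \to B_5$ and compute the image of each twist explicitly. A Dehn twist about a symmetric nonseparating curve maps to a half-twist about the corresponding arc joining two of the five marked points of $D_5$; a Dehn twist about a symmetric genus-$1$ separating curve maps to the Dehn twist about the curve in $D_5$ bounding the band of three strands that it covers (a conjugate of $(\sigma_1\sigma_2)^3$); and $T_{\partial S_2^1}$ maps to a generator of the center $Z(B_5) = \langle \Delta^2\rangle$. Reading the arcs and bands off Figure~\ref{figure:crazy1} turns the claimed identity into a concrete (if lengthy) word identity in $B_5$. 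To check it, I would use that $B_5$ acts faithfully on $\pi_1(D_5^\circ) \cong F_5$ and compare the automorphisms induced by $\Theta(g)$ and $\Theta(T_f)$ on the five free generators -- exactly the sort of finite check performed by the program $\beta$-Twister -- or else grind the two braid words together using the braid relations. In fact it is enough to verify that $\Theta(g)\Theta(T_f)^{-1}$ is central, i.e.\ a power of $\Delta^2$, since that is all that is needed before capping.

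The crux, and the only genuine difficulty, is this verification. There is no homological shortcut: both $g$ and $T_f$ lie in the hyperelliptic Torelli group, so they act identically (and trivially) on $H_1$, and one must compare the two mapping classes on the nose -- on $F_5$, or by the Alexander method applied to a filling system of arcs in $D_5$. The real labour is bookkeeping: fixing orientation conventions consistently (recall that positive Dehn twists act to the left), correctly identifying which band of three strands each separating curve in Figure~\ref{figure:crazy1} covers, and reducing the resulting braid word; this is presumably precisely the computation behind the trial and error mentioned in the introduction. Once the braid identity is established, capping $\partial S_2^1$ -- which kills $\Delta^2$ -- yields $[T_cT_e^{-1},T_bT_d^{-1}]T_a^2 = T_f$ in $\Mod(S_2)$, and hence in $\SI(S_2)$.
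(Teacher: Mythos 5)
The paper offers no written proof of Theorem~\ref{thm:aux1}: it simply remarks that the relation can be checked with the Alexander Method on $S_2$ and points to \cite{v1} for a conceptual argument. Your outline -- transport to $B_5$ via Birman--Hilden, compare the two braids via their action on $\pi_1(D_5^\circ)\cong F_5$ (or via the Alexander method on $D_5$), then cap -- is a legitimate way to organize that same finite verification, and you are right that the real content is the bookkeeping. So the overall strategy is fine.

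However, the dictionary you propose to use is wrong in a way that would corrupt the computation. Under the Birman--Hilden isomorphism $\Theta:\SMod(S_g^1)\to B_{2g+1}$, a Dehn twist about a symmetric \emph{separating} curve maps to the \emph{square} of the Dehn twist about its image curve in the disk, not to the Dehn twist itself; this is stated explicitly in Section~\ref{sec:alg} of the paper, and can be checked on an annular neighborhood (the core of the annulus is doubly covered, so a full twist upstairs descends to a double twist downstairs). Concretely, for a genus-$1$ separating curve covering a band of three strands the image is a conjugate of $(\sigma_1\sigma_2)^6$, not $(\sigma_1\sigma_2)^3$, and $\Theta(T_{\partial S_2^1}) = \Delta^4$, not $\Delta^2$. (For a sanity check: in $\Mod(S_1^1)\cong B_3$ one has $T_{\partial}=(\sigma_1\sigma_2)^6$, while the center of $B_3$ is generated by $(\sigma_1\sigma_2)^3$.) As a consequence your reduction ``it is enough to verify that $\Theta(g)\Theta(T_f)^{-1}$ is a power of $\Delta^2$'' is too weak on its face: $\Theta^{-1}(\Delta^2)$ is not a power of $T_{\partial S_2^1}$ and does not die under capping (it caps to the hyperelliptic involution $s$). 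The kernel of capping corresponds to $\langle\Delta^4\rangle$, so you would need to rule out the odd powers of $\Delta^2$ -- which you can, but only by invoking an extra fact such as that $gT_f^{-1}$ lies in the Torelli group while $s$ does not. These errors need to be fixed before the braid computation you sketch could actually be carried out.
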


One can check the relation in Theorem~\ref{thm:aux1} using the Alexander Method \cite[Section 2.3]{primer}; see \cite[Section 4.1]{v1} for a conceptual proof.


\section{Applications}
\label{sec:app}

In this section, we give explicit factorizations of symmetric simply intersecting pair maps and symmetrized simply intersecting pair maps into Dehn twists about symmetric separating curves.  We also give an explicit factorization of the Dehn twist about any genus $k \geq 3$ symmetric separating curve into Dehn twists about symmetric separating curves of smaller genus.

\subsection{Factoring symmetric simply intersecting pair maps}
\label{sec:ssip}

We start by writing the symmetric simply intersecting pair map from Figure~\ref{figure:ssip} as a product of Dehn twists about symmetric separating curves.  

\begin{figure}[htb]
\psfrag{x}{$a$}
\psfrag{a}{$y$}
\psfrag{b}{$x$}
\psfrag{c}{$v$}
\psfrag{c'}{$w$}
\centerline{\includegraphics[scale=.9]{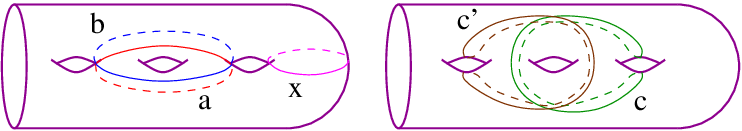}}
\caption{The curves used in Theorem~\ref{thm:ssip}.}
\label{figure:ssip}
\end{figure}

\begin{thm}
\label{thm:ssip}
Every symmetric simply intersecting pair map is the product of two Dehn twists about symmetric simple closed curves.  In particular, 
if $x$, $y$, $v$, and $w$ are the simple closed curves shown in Figure~\ref{figure:ssip}, we have:
\[ [T_x,T_y] = T_v^{-1}T_w. \]
\end{thm}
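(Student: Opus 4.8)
The plan is to verify the identity $[T_x,T_y] = T_v^{-1}T_w$ by recognizing $[T_x,T_y]$ as an element of some $\SIBK(S_g^1,a)$ and then running the translation into the punctured disk described in Section~\ref{sec:alg}. First I would set $a$ to be the core of the annular region in the complement of $x \cup y$ (as the excerpt suggests is always possible for symmetric simply intersecting pair maps), so that $[T_x,T_y] \in \SIBK(S_g^1,a)$. Here $x$ and $y$ are a symmetric pair of nonseparating curves meeting once (up to the $s$-action their images $\overline x, \overline y$ in $D_{2g-1}^\circ$ are arcs), and $T_x,T_y$ each correspond to half-twists $h_{\overline x}, h_{\overline y}$ in $\Mod(D_{2g-1}^\circ,p)$ under the dictionary. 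So $[T_x,T_y]$ maps to $[h_{\overline x}, h_{\overline y}]$, which I would read off as an element $\alpha$ of the point-pushing subgroup $\pi_1(D_{2g-1}^\circ,p)$.

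The key computation is to identify $\alpha$ explicitly as a loop in $\pi_1(D_{2g-1}^\circ,p)$. Because $x$ and $y$ intersect once and $a$ is the core of a neighborhood annulus, the configuration is small — only a couple of marked points of $D_{2g-1}$ are actually involved — so $\alpha$ should be, up to conjugation, a short word such as $x_i^2$ or $(x_jx_ix_1)^2$, i.e. the square of a simple loop surrounding one or three punctures. Concretely I expect the point $p$ gets pushed around a loop enclosing the marked points trapped between the two strands of $\overline x \cup \overline y$; tracking how the two half-twists conjugate one another should give $\alpha = \beta^2$ for an explicit simple loop $\beta$. Then by the ``squares of simple loops'' discussion, $\Psi^{-1}(\beta^2) = T_cT_d^{-1}$ where $c$ and $d$ are the preimages in $S_g^1$ of the curves obtained by pushing $\beta$ off $p$ to the left and right. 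Matching these $c$ and $d$ against the curves $w$ and $v$ of Figure~\ref{figure:ssip} (orientations/handedness included) yields $[T_x,T_y] = T_w T_v^{-1} = T_v^{-1}T_w$, the last equality since $v$ and $w$ are disjoint so their twists commute.

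The main obstacle is bookkeeping: getting the direction of the point-pushing loop, the left-versus-right pushoff, and the order reversal coming from the anti-homomorphism in the Birman exact sequence all consistent, so that the final answer comes out as $T_v^{-1}T_w$ rather than its inverse or $T_vT_w^{-1}$. A clean way to sidestep sign ambiguities is to instead verify the equivalent \emph{local} identity directly in the relevant small subsurface (a sphere with a few boundary components / the corresponding subdisk with a few punctures) using the Alexander method, checking both sides agree on a sufficient collection of arcs and curves; alternatively one can cap off and reduce to a genus-two picture and invoke a relation like the one in Theorem~\ref{thm:aux1}. Either route makes the verification a finite, mechanical check once the curves and the basepoint data in Figure~\ref{figure:ssip} are pinned down.
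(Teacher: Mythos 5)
Your strategy --- pass to $\SIBK(S_g^1,a)$ with $a$ a core of an annulus in the complement of $x\cup y$, translate $[T_x,T_y]$ to the point-pushing subgroup of the punctured disk, and read off a factorization into squares of simple loops --- is essentially the paper's second (braid-theoretic) proof together with the remark following it.  But there is a genuine gap in the middle, and a factual slip.  The slip: for a symmetric simply intersecting pair, $x$ and $y$ intersect in \emph{two} points, not one; with a single transverse intersection $\hat\imath(x,y)=\pm 1$ and $[T_x,T_y]$ would not even be in the Torelli group.  In the quotient the arcs $\overline x,\overline y$ share \emph{both} endpoints, which is what makes the configuration-space computation tractable.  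The gap: you expect the $\Psi$-image of $[T_x,T_y]$ to be, up to conjugacy, a \emph{single} square $\beta^2$ of a simple loop about 1 or 3 punctures, so that $\Psi^{-1}(\beta^2)=T_cT_d^{-1}$.  That cannot reproduce $T_v^{-1}T_w$: a one-puncture square gives a single twist (one of $c,d$ is inessential), and a three-puncture square gives twists about curves of genera $1$ and $2$, whereas $v,w$ are both genus~$1$.  In fact the correct answer is a product of \emph{two} one-puncture squares $\alpha^2\beta^2$, and the paper's second proof obtains this by explicitly tracking the motion of the two endpoints $q_1,q_2$ under $H_{\overline x}H_{\overline y}$ and $H_{\overline x}^{-1}H_{\overline y}^{-1}$, arriving at $T_{\overline v}^{-2}T_{\overline w}^2$.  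Your sketch does not carry out that computation, and the expectation you substitute for it is inconsistent with the claimed answer.

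Your fallbacks (Alexander method; reduce to a local picture and verify) are legitimate but purely verificational.  Note that the paper's \emph{first} proof is both shorter and more illuminating: it runs the lantern relation twice on the same four-holed sphere, giving $T_vT_xT_y=M$ and $T_wT_yT_x=M'$ with $M=M'$, whence $[T_x,T_y]=(T_xT_y)(T_x^{-1}T_y^{-1})=(T_v^{-1}M)(M^{-1}T_w)=T_v^{-1}T_w$.  This sidesteps the braid bookkeeping entirely and immediately yields the general statement by change of coordinates.
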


Note that the first statement follows immediately from the second statement and the change of coordinates principle \cite[Section 1.3]{primer}.

We give two proofs of Theorem~\ref{thm:ssip}.  The first is an easy application of the lantern relation, a relation between (left) Dehn twists about 7 curves lying in a subsurface homeomorphic to a sphere with four boundary components;  see \cite[Section 5.1.1]{primer}.

\begin{proof}[First proof of Theorem~\ref{thm:ssip}]

By the lantern relation, we have $T_vT_xT_y=M$ and $T_{w}T_yT_x=M'$, where $M$ and $M'$ are the products of twists about the boundary curves of the corresponding four-holed spheres.  Since $x$ and $y$ appear in both lantern relations and since a regular neighborhood of $x \cup y$ is a sphere with four holes, the four-holed spheres in the two lantern relations are equal, and so $M=M'$.  Thus,
\[
[T_x,T_y] = (T_xT_y)(T_x^{-1}T_y^{-1}) = (T_{v}^{-1}M)(M^{-1}T_{w}) = T_{v}^{-1}T_{w},\]
as desired.
\end{proof}

We now give a proof of Theorem~\ref{thm:ssip} that is intrinsic to the braid group.

\begin{figure}[htb]
\psfrag{a}{$\beta$}
\psfrag{b}{$\alpha$}
\psfrag{x}{$\overline x$}
\psfrag{y}{$\overline y$}
\psfrag{w}{$\overline v$}
\psfrag{w'}{$\overline w$}
\psfrag{g}{$\gamma$}
\psfrag{de}{$\delta$}
\psfrag{d}{$d$}
\psfrag{e}{$e$}
\psfrag{p}{$q_1$}
\psfrag{q}{$q_2$}
\centerline{\includegraphics[scale=.9]{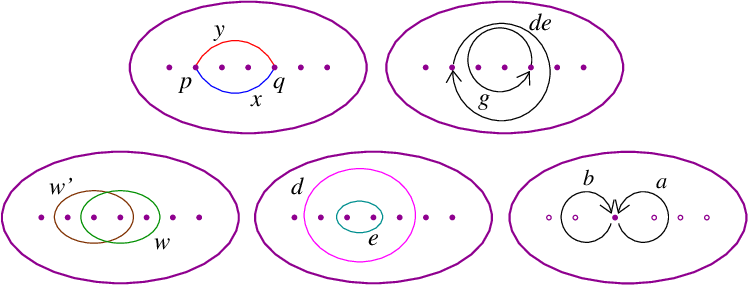}}
\caption{Curves, loops, and arcs used in the second proof of Theorem~\ref{thm:ssip}}
\label{figure:wbar}
\end{figure}

\begin{proof}[Second proof of Theorem~\ref{thm:ssip}]

The images of $x$ and $y$ in $D_{2g+1}$ are arcs $\overline x$ and $\overline y$; denote their endpoints by $q_1$ and $q_2$ (throughout, refer to Figure~\ref{figure:wbar}).  As above, $T_x$ and $T_y$ correspond to the half-twists $H_{\overline x}$ and $H_{\overline y}$ in $\Mod(D_{2g+1})$.  

As a loop in the space of configurations of $2g+1$ points in the disk (see \cite[Theorem 9.1]{primer}), the product $H_{\overline x}H_{\overline y}$ is given by the motion of points where $q_1$ and $q_2$ move around $\delta$ and $\gamma$, respectively (we multiply half-twists right to left).  These motions correspond to the mapping classes $T_{\overline v}^{-1}T_{d}$ and $T_{\overline v}^{-1}T_e$, respectively.  Similarly, $H_{\overline x}^{-1}H_{\overline y}^{-1}$ corresponds to $(T_{\overline w}T_d^{-1})(T_{\overline w}   T_e^{-1})$.  Since $T_d$ and $T_e$ commute with all the other twists, the original commutator $[T_x,T_y]$ in $\SI(S_g^1)$ corresponds to $T_{\overline v}^{-2}T_{\overline w}^{2}$ in $\Mod(D_{2g+1})$.  The preimage under $\Psi$ is $T_{v}^{-1}T_{w}$ in $\SMod(S_g^1)$, as desired.
\end{proof}

The second proof of Theorem~\ref{thm:ssip} has a connection with the algorithm from Section~\ref{sec:alg}.  Let $a$ be the symmetric simple closed curve in $S_g^1$ shown in Figure~\ref{figure:ssip}.  Assuming Theorem~\ref{thm:ssip}, and referring to Figure~\ref{figure:wbar}, we can see that $T_{\overline v}^{-2}T_{\overline w}^{2}$, the image of $T_v^{-1}T_{w}$ under $\Psi$, is $\alpha^2\beta^2$, where $\alpha$ and $\beta$ are as shown in Figure~\ref{figure:wbar}.  This is a product of squares of simple loops, each surrounding one puncture, as per Section~\ref{sec:alg}.


\subsection{Factoring symmetrized simply intersecting pair maps}
\label{sec:szsip}

We now address the symmetrized simply intersecting pair maps.

\begin{thm}
\label{thm:szsip}
Every symmetrized simply intersecting pair map is equal to a product of six Dehn twists about symmetric separating curves.
\end{thm}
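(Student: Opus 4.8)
The plan is to reduce the symmetrized simply intersecting pair map to the machinery of Section~\ref{sec:alg} by choosing an appropriate symmetric nonseparating curve $a$, and then to carry out the factoring algorithm explicitly. First I would pick $a$ to be the core of an annular region in the complement of $u_1 \cup v_1 \cup u_2 \cup v_2$, as indicated in the introduction; this places the map $[T_{u_1}T_{u_2}, T_{v_1}T_{v_2}]$ inside $\SIBK(S_g^1, a)$ for a suitable $g$ (one can take $g$ as small as the configuration allows, so that the computation happens in a fixed small surface rather than in $S_g^1$ for general $g$). Under the isomorphism $\Psi$, the map is carried into $\ker \epsilon \subset \pi_1(D_{2g-1}^\circ, p)$, and the task becomes: write down this element of the free group $F_{2g-1}$ as an explicit word in the generators $x_i$, and then find a factorization into squares of simple loops surrounding $1$ or $3$ punctures, each such square pulling back to $T_c$ (genus-$1$ case) or $T_c T_d^{-1}$ (genus-$1$-and-$2$ case) via the dictionary in the ``Squares of simple loops'' paragraph.

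The key computational steps, in order, are as follows. (1) Translate the four Dehn twists $T_{u_1}, T_{u_2}, T_{v_1}, T_{v_2}$ into half-twists about the image arcs $\overline{u_i}, \overline{v_i}$ in $D_{2g-1}^\circ$, using the dictionary ``Dehn twist about symmetric nonseparating curve $\leftrightarrow$ half-twist about image arc'' together with the Birman--Hilden map $\Theta$. (2) Compute the product $[T_{u_1}T_{u_2}, T_{v_1}T_{v_2}]$ as a braid, then apply Artin combing (as in the algorithm) to express the resulting pure braid as a word $w_0$ in the $x_i^{\pm 1}$; I expect substantial cancellation because the symmetrization forces the two ``halves'' of the commutator to interact in a controlled way, exactly as in the second proof of Theorem~\ref{thm:ssip} where the commutator collapsed to $T_{\overline v}^{-2} T_{\overline w}^2 = \alpha^2 \beta^2$. (3) Rewrite $w_0$ in $F_{2g-1}^{even}$, then use the identities relating $[x_i x_1, x_j x_1]$ to products of $(x_j x_i x_1)^2$, $x_i^{\pm 2}$, together with the genus-two relation of Theorem~\ref{thm:aux1} (namely $[T_c T_e^{-1}, T_b T_d^{-1}] T_a^2 = T_f$) to collapse any genus-two pieces. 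The number six in the statement strongly suggests the final answer is, up to conjugation and sign, a product such as $T_f^{\pm 1}$ (one twist, absorbing a genus-two commutator) together with five further genus-$1$/genus-$2$ twists, or perhaps two applications of a Theorem~\ref{thm:aux1}-type collapse; pinning down which six curves appear is the heart of the matter and will require drawing the image arcs carefully and tracking them through the point-pushing maps.

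The main obstacle I anticipate is step (2): honestly computing the braid word for $[T_{u_1}T_{u_2}, T_{v_1}T_{v_2}]$ and combing it down. The symmetrized configuration has four curves with the two intersection constraints $|u_1 \cap v_1| = 2$, $\hat\imath(u_1,v_1) = 0$, so each $H_{\overline{u_i}} H_{\overline{v_i}}$-type product is already a nontrivial point-pushing map (as in the $T_{\overline v}^{-1} T_d$ computations of the second proof of Theorem~\ref{thm:ssip}), and the commutator of products of two such is a priori a long word. The trick will be to exploit, as in that second proof, that certain twists (the analogues of $T_d, T_e$ about boundary-parallel or disjoint curves) commute with everything and cancel in pairs when the commutator is formed, so that the effective contribution is a short word of point-pushes that one recognizes directly as $\alpha_1^2 \alpha_2^2 \alpha_3^2 \cdots$ or as a single $(x_j x_i x_1)^2$-type loop. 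Once the word is in this recognizable form, pulling back under $\Psi$ and reading off the six symmetric separating curves via the dictionary is mechanical. I would also, as a consistency check, verify the resulting relation directly in a small genus (e.g. $g = 2$ or $3$) using the Alexander method, exactly as suggested for Theorem~\ref{thm:aux1}, and cross-check with $\beta$-Twister.
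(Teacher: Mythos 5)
Your overall framework is the right one and matches the paper's: choose $a$ to be the core of an annulus in the complement of the four curves so that $[T_{u_1}T_{u_2},T_{v_1}T_{v_2}]$ lies in $\SIBK(S_g^1,a)$, apply $\Psi$ to land in $\ker\epsilon\subset\pi_1(D_{2g-1}^\circ,p)$, and factor there into squares of simple loops around $1$ or $3$ punctures. But what you have written is a plan, not a proof, and the gap is precisely the part you flag as ``the heart of the matter.'' The paper does not run Artin combing at all. Instead it identifies the image under $\Psi$ directly and geometrically: since $u_1$ and $u_2=s(u_1)$ project to a single simple closed curve $\overline u$ in the quotient disk, $\Psi(T_{u_1}T_{u_2})=T_{\overline u}$ (and likewise $\Psi(T_{v_1}T_{v_2})=T_{\overline v}$), and then observing that the point-pushes $\gamma,\delta$ map to $T_{\overline u}^{-1}T_{\overline u'}$ and $T_{\overline v}^{-1}T_{\overline v'}$ with the primed twists central, one gets $\Psi\bigl([T_{u_1}T_{u_2},T_{v_1}T_{v_2}]\bigr)=[\delta,\gamma]$, i.e.\ $[x_4x_3,x_2x_1]$. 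The proof then supplies the explicit free-group identity
\[
[\delta,\gamma]=\bigl[\beta^2\,(\beta^{-1}\gamma)^2\,(\alpha\gamma)^{-2}\,\alpha^2\bigr]^\alpha,
\]
whose four squares encircle $1$, $3$, $3$, $1$ punctures, giving $1+2+2+1=6$ Dehn twists. Neither that identification of $\Psi$-image nor that identity appears in your proposal; you acknowledge both as obstacles you ``anticipate'' rather than overcome.

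A secondary issue: your guess that Theorem~\ref{thm:aux1} is used to ``collapse genus-two pieces,'' and that the answer looks like $T_f^{\pm 1}$ plus five more twists, is off. The paper's proof never invokes Theorem~\ref{thm:aux1}; the six twists come directly from the four squares in the free-group identity above, two of which are squares of $3$-puncture loops contributing a $T_cT_d^{-1}$ pair each. If you want to complete your argument, the concrete task is to verify (say via the Alexander method or $\beta$-Twister, as you suggest) the identity $[x_4x_3,x_2x_1]=\bigl[(x_3x_2x_1)^2(x_3^{-2})^{(x_2x_1)^{-1}}(x_4x_2x_1)^{-2}x_4^2\bigr]^{x_4}$ and then read off the six symmetric separating curves.
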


\begin{figure}[htb]
\psfrag{ubar}{$\overline u$}
\psfrag{vbar}{$\overline v$}
\psfrag{u}{$u_1$}
\psfrag{v}{$v_1$}
\psfrag{u'}{$u_2$}
\psfrag{v'}{$v_2$}
\psfrag{x}{$a$}
\psfrag{a}{$\alpha$}
\psfrag{b}{$\beta$}
\psfrag{c}{$\gamma$}
\psfrag{de}{$\delta$}
\psfrag{d}{$d$}
\psfrag{e}{$e$}
\centerline{\includegraphics[scale=.9]{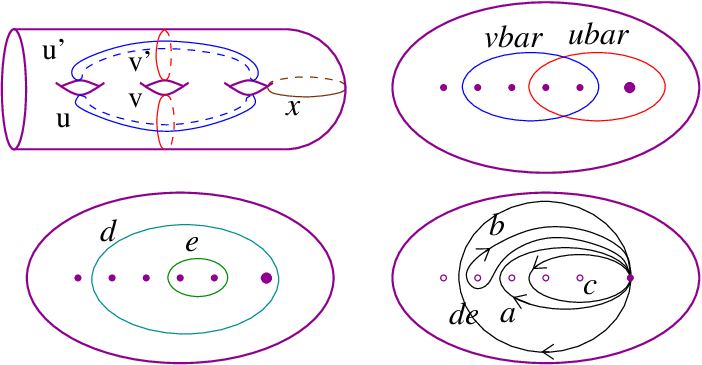}}
\caption{The curves and loops used in the proof of Theorem~\ref{thm:szsip}.}
\label{figure:cibar}
\end{figure}

\begin{proof}

Consider the symmetrized simply intersecting pair map shown in Figure~\ref{figure:cibar} (throughout we refer to this figure).  First we notice that $[T_{u_1}T_{u_2},T_{v_1}T_{v_2}]$ lies in $\SIBK(S_g^1,a)$.  We claim that the image of this commutator under the map $\Psi$ from Section~\ref{sec:alg} is $[\delta,\gamma]$.  Indeed, we have $\Psi(T_{u_1}T_{u_2})=T_{\overline u}$ and $\Psi(T_{v_1}T_{v_2})=T_{\overline v}$, and so the claim follows from the fact that the images of $\delta$ and $\gamma$ in $\Mod(D_{2g-1}^\circ,p)$ are $T_{\overline v}^{-1}T_{\overline v'}$ and $T_{\overline u}^{-1}T_{\overline u'}$ and the fact that $T_{\overline u'}$ and $T_{\overline v'}$ commute with all other twists in the commutator (remember that the order of multiplication gets reversed!).

Now that we have written $\Psi([T_{u_1}T_{u_2},T_{v_1}T_{v_2}])$ as an element of the free group $\pi_1(D_{2g-1}^\circ,p)$, we observe the following factorization in this free group:
\[ [\delta,\gamma] = [\beta^2 (\beta^{-1}\gamma)^2 (\alpha\gamma)^{-2} \alpha^2]^\alpha \]

As in Section~\ref{sec:alg}, this is a product of squares of simple loops in $\pi_1(D_{2g-1}^\circ,p)$ surronding 1 or 3 punctures, and hence the preimage under $\Psi$ is a product of Dehn twists about symmetric separating curves of genus 1 and 2 in $S_g^1$.  The first and fourth loops each correspond to a single Dehn twist, while the second and third loops each correspond to a pair of Dehn twists.
\end{proof}

From the proof of Theorem~\ref{thm:szsip}, it is straightforward, though not necessarily enlightening, to draw the six symmetric separating simple closed curves whose Dehn twists factorize the symmetrized simply intersecting pair map shown in Figure~\ref{figure:maps}.   For an explicit picture, see the first version of this paper \cite{v1}. 

In terms of the $x_i$ from Figure~\ref{figure:zetas}, we can also write the $\Psi$-image of a symmetrized simply intersecting pair map as $[x_4x_3,x_2x_1]$.  In the free group, this factors as:
\[ [x_4x_3,x_2x_1] = [(x_3x_2x_1)^2 (x_3^{-2})^{(x_2x_1)^{-1}} (x_4x_2x_1)^{-2}(x_4^2)]^{x_4}. \]
Again, the right-hand side in this equality is a product of squares of simple loops and so we obtain an alternate factorization.

The relation given in Theorem~\ref{thm:szsip} involves 14 Dehn twists, twice the number of Dehn twists involved in the lantern relation.  However, there is no way to rearrange our relation into a product of two lantern relations.


\subsection{Factoring higher genus twists}

Finally, we obtain the factorization of a Dehn twist about an arbitrary symmetric separating curve into a product of Dehn twists about symmetric separating curves, each having genus 1 or 2, by applying the following theorem inductively. 

\begin{thm}
\label{thm:genus k}
Let $d$ denote the boundary of $S_g^1$, and let $c$ denote a symmetric separating curve of genus $g-1$.  The product $T_dT_c^{-1}$ is equal to a product of 10 Dehn twists about symmetric separating curves in $S_g^1$, each of genus at most $g-1$.
\end{thm}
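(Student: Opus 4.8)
The plan is to exploit the algorithm of Section~\ref{sec:alg} by choosing a symmetric nonseparating curve $a$ so that $T_dT_c^{-1}$ lies in $\SIBK(S_g^1,a)$, translate the relation into the free group $\pi_1(D_{2g-1}^\circ,p)$, and there display an explicit factorization of the corresponding element into squares of simple loops surrounding $1$ or $3$ punctures. First I would pick $a$ in the genus~$1$ subsurface between $c$ and $d$, exactly as described in the discussion following Theorem~\ref{thm:alg}; this guarantees $T_dT_c^{-1}\in\SIBK(S_g^1,a)$. Under $\Psi$, the curve $c$ has genus $g-1$, so $\overline c$ bounds a disk containing $2(g-1)-1=2g-3$ of the $2g-1$ punctures, while the outer curve $d=\partial S_g^1$ corresponds to the trivial-as-a-separating-curve case; concretely, $\Psi(T_dT_c^{-1})$ should be $\gamma^2$ for a single simple loop $\gamma$ in $D_{2g-1}^\circ$ surrounding $2g-3$ punctures (equivalently, $\gamma^{-2}$ surrounding the complementary $2$ punctures, up to conjugacy). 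Writing this in terms of the generators of Figure~\ref{figure:zetas}, the element to be factored is (a conjugate of) $(x_1x_2\cdots x_{2g-3})^2$, or its inverse $(x_{2g-2}x_{2g-1})^2$ read from the other side.

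The key step is then a \emph{single fixed} free-group identity, independent of $g$, that peels off one puncture at a time. The model is the identity already used in Section~\ref{sec:alg}, namely the rewriting of $(x_jx_ix_1)^2$, together with the genus-two relation of Theorem~\ref{thm:aux1}, which says that the ``commutator plus $T_a^2$'' combination $[T_cT_e^{-1},T_bT_d^{-1}]T_a^2$ is a single Dehn twist $T_f$. I would aim to show that a square of a simple loop around $k$ punctures equals a product of a square of a loop around $k-1$ punctures (or $k-2$), a square of a loop around $3$ punctures coming from a genus-two twist à la Theorem~\ref{thm:aux1}, and a bounded number of genus-one twists and their inverses, with the bookkeeping arranged so that the total count telescopes to exactly $10$. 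In free-group terms this is an identity of the shape
\[
(x_1\cdots x_k)^2 = \bigl[(x_1\cdots x_{k-1})^{2}\,\cdot\, w_1\,\cdot\,(x_{k-1}x_k x_1\cdots)^{\pm2}\,\cdot\, w_2\bigr]^{z},
\]
where $w_1,w_2$ are (products of at most a couple of) $x_i^{\pm2}$ and $z$ is a conjugator; verifying such an identity is a finite free-group computation (cf.\ $\beta$-Twister), and one checks the constant $10$ by counting: the inductive genus-$(g-1)$ twist $T_c$ already accounts for one, the genus-two piece contributes the five twists of \cite[Theorem 3.1]{v1} (or the compressed count of Theorem~\ref{thm:aux1}), and the remaining genus-one corrections $T_a^{\pm2}$, $T_{\overline{\cdot}}^{\pm2}$ fill out the difference.

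The main obstacle I expect is not the existence of a factorization — Section~\ref{sec:alg} already guarantees one algorithmically — but pinning the count down to exactly $10$ and ensuring every curve that appears genuinely has genus at most $g-1$ rather than genus $g$. The danger is that a naive application of the algorithm produces a loop surrounding $2g-3$ punctures whose two pushoffs have genera $g-1$ and $g-2$, which is fine, but intermediate steps in the free-group rewriting can momentarily produce loops around $2g-1$ punctures (genus $g$), which must be avoided or cancelled in pairs. So the real work is to choose the rewriting so that at each stage the simple loops that occur bound subsurfaces of genus $\le g-1$, and to do the arithmetic of the twist-count carefully; I would organize this by first proving the clean genus-two base case (which is Theorem~\ref{thm:aux1}) and then checking that one application of a universal ``peel-one-puncture'' relation, built from Theorem~\ref{thm:aux1}, adds exactly the right number of genus-$1$ and genus-$2$ twists while reducing $T_d$ to $T_c$ — at which point the theorem follows, and iterating it over $g, g-1, \dots, 2$ yields the promised factorization of an arbitrary genus-$k$ symmetric separating twist.
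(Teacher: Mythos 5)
Your high-level strategy is exactly the paper's: choose $a$ in the genus-$1$ annulus between $c$ and $d$, observe $T_dT_c^{-1}\in\SIBK(S_g^1,a)$, identify $\Psi(T_dT_c^{-1})$ as the square of a simple loop, and exhibit a free-group factorization of that square into squares of simple loops surrounding $1$ or $3$ (or at least fewer) punctures. However, the proposal as written has one arithmetic slip and one genuine gap.

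The slip: a genus-$(g-1)$ symmetric separating curve projects to a curve $\overline c$ surrounding $2(g-1)+1 = 2g-1$ marked points (the relationship is $m = 2k+1$, not $2k-1$), and after collapsing $\overline a$ to $p$ the element $\Psi(T_dT_c^{-1})$ is the square of the push of $p$ around \emph{all} $2g-1$ punctures, i.e.\ $(x_{2g-1}\cdots x_1)^2$, not a loop around $2g-3$ punctures. This doesn't derail the strategy, but it does change what you need to factor.

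The gap: the entire content of the proof is the explicit free-group identity, and you don't produce one. You lay out the shape of a ``peel-one-puncture'' relation and observe (correctly) that verifying any such identity is a finite computation, but that observation is the existence statement already guaranteed by the algorithm of Section~\ref{sec:alg}; the theorem asserts a specific count (and a genus bound on every curve appearing), and both of those are properties of a particular identity, not of the algorithm in general. You correctly flag the danger that naive rewriting can introduce loops around all $2g-1$ punctures (genus $g$), but the remedy is to write down a factorization that avoids them, and that is precisely what's missing. The paper handles this by a single explicit identity in the free group: with $y_5 = x_{2g-1}\cdots x_5$, it factors $(y_5x_4x_3x_2x_1)^2$ as a product of conjugates of $(x_2x_1y_5)^{\pm2}$, $(x_3x_1y_5)^{\pm2}$, $(x_1y_5x_4)^{\pm2}$, $(x_4x_3x_2)^{\pm2}$, and $x_i^{\pm2}$ — loops around $2g-3$, $3$, or $1$ punctures, hence curves of genus at most $g-1$. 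Your proposal also leans on Theorem~\ref{thm:aux1} as a building block, but the paper's proof of this theorem does not invoke it; the factorization is direct. Until the concrete identity (or an explicit recursive one) is written down and its twist-count and genera are checked, the argument is a plan rather than a proof.
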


\begin{proof}

Let $a$ be a symmetric nonseparating curve in $S_g^1$ lying between $c$ and $d$.  The image of $d$ in $D_{2g+1}$ is the boundary of the disk, and if we choose the identification of $S_g^1/\langle s \rangle$ with $D_{2g+1}$ appropriately, the image of $c$ in $D_{2g+1}$ is a round circle surrounding the $2g-1$ leftmost marked points and the image of $a$ is a straight arc connecting the other two marked points.  

Let $y_5$ denote $x_{2g-1}x_{2g} \cdots x_5$.  It follows from the previous paragraph that the image of $T_dT_c^{-1}$ under $\Psi$ is equal to the image of $(y_5x_4x_3x_2x_1)^2$ under the point pushing map $\pi_1(D_{2g-1}^\circ,p) \to \Mod(D_{2g-1},p)$.

As in Section~\ref{sec:alg}, we factor $(y_5x_4x_3x_2x_1)^2$ into a product of simple loops each surrounding an odd number of punctures:
\begin{align*}
 [  (x_2x_1y_5)^2(x_2^{-2})^{y_5^{-1}x_1^{-1}}(x_3x_1y_5)^{-2}x_3^2]^{y_5x_4x_3}  
[ (x_1y_5x_4)^2  (x_4^{-2})(x_4x_3x_2)^2 ]^{x_1^{-1}}.
\end{align*}
This is a product of 11 Dehn twists about symmetric separating curves, three of genus $g-1$, three of genus $g-2$, four of genus 1, and one of genus 2. 
\end{proof}


\bibliographystyle{plain}
\bibliography{factor}

\def\cprime{$'$} \def\cprime{$'$}
\begin{thebibliography}{1}

\bibitem{artin2}
Emil Artin.
\newblock Theory of braids.
\newblock {\em Ann. of Math. (2)}, 48:101--126.

\bibitem{v1}
Tara Brendle and Dan Margalit.
\newblock {Factoring in the hyperelliptic Torelli group}.
\newblock {arXiv:1202.2365v1}.

\bibitem{sibk}
Tara Brendle and Dan Margalit.
\newblock {Point pushing, homology, and the hyperelliptic involution}.
\newblock {to appear in Michigan Mathematical Journal}.

\bibitem{BMP}
Tara Brendle, Dan Margalit, and Andrew Putman.
\newblock {Generators for the hyperelliptic Torelli group and the kernel of the
  Burau representation at $t=-1$}.
\newblock {Preprint}.

\bibitem{primer}
Benson Farb and Dan Margalit.
\newblock {\em {A primer on mapping class groups}}.
\newblock {Princeton Univ. Press}, {2011}.

\bibitem{hain}
Richard Hain.
\newblock Finiteness and {T}orelli spaces.
\newblock In {\em Problems on mapping class groups and related topics},
  volume~74 of {\em Proc. Sympos. Pure Math.}, pages 57--70. Amer. Math. Soc.,
  Providence, RI, 2006.

\end{thebibliography}

\end{document}